\def\cP{\mathcal{P}}
\def\cB{\mathcal{B}}
\def\e{\epsilon}
\renewcommand{\Pr}{\operatorname{\bf Pr}}
\newcommand\bfrac[2]{\left(\frac{#1}{#2}\right)}
\newtheorem{theorem}{Theorem}[section]
\newtheorem{lemma}[theorem]{Lemma}
\newtheorem{remark}[theorem]{Ramark}
\date{}
\title{Constructing Hamilton cycles and perfect matchings efficiently}
\author{Michael Anastos\footnote{
Institute of Science and Technology Austria, 
Klosterneurburg 3400, Austria. Email: michael.anastos@ist.ac.at}
}
\begin{document}
\maketitle
\begin{abstract}
Let $\e>0$. We consider the problem of constructing a Hamiltonian graph with $(1+\e)n$ edges in the following controlled random graph process. Starting with the empty graph on $[n]$, at each round a set of $K=K(n)$ edges is presented, chosen uniformly at random from the missing ones (or from the ones that have not been presented yet), and we are asked to choose at most one of them and add it to the current graph. We show that in this process one can build a Hamiltonian graph with at most $(1+\e)n$ edges in $(1+\epsilon)(1+(\log n)/2K) n$ rounds  w.h.p. The case $K=1$ implies that w.h.p. one can build a Hamiltonian graph by choosing $(1+\e)n$ edges in an on-line fashion as they appear along the first $(0.5+\epsilon)n\log n$ steps of the random graph process, this refutes a conjecture of Frieze, Krivelevich and Michaeli. The case $K=\Theta(\log n)$ implies that the Hamiltonicity threshold of the corresponding Achlioptas process is at most $(1+\epsilon)(1+(\log n)/2K) n$. This matches the $(1-\epsilon)(1+(\log n)/2K) n$ lower bound due to Krivelevich, Lubetzky and Sudakov and resolves the problem of determining the Hamiltonicity threshold of the Achlioptas process with $K=\Theta(\log n)$.

We also show that in the above process w.h.p. one can construct a graph $G$ that spans a matching of size $\lfloor V(G)/2) \rfloor$, with $(0.5+\e)n$ edges, within $(1+\epsilon)(0.5+(\log n)/2K) n$ rounds.

Our proof relies on a robust Hamiltonicity property of the strong $4$-core of the binomial random graph which we use as a black-box. This property allows it to absorb paths covering vertices outside the strong $4$-core into a cycle.

\end{abstract}

\section{Introduction}
Let $G_0,G_1,...,G_N$, $N=\binom{n}{2}$ be the random graph process. That is, $G_0$ is the empty graph on $[n]$ and $G_{i+1}$ is formed by adding to $G_i$ an edge chosen uniformly at random from the non-present ones, for $0\leq i<N$. Equivalently let $e_1,e_2,...,e_N$ be a permutation of the edges of the complete graph $K_n$ chosen uniformly at random and set $G_i=([n],\{e_1,...,e_i\})$, $0\leq i<N$. Let $\tau_2$ be the minimum $i$ such that $G_i$ has minimum degree $2$  and $\tau_H$ be the minimum $i$ such that $G_i$ is Hamiltonian. Bollob\'as \cite{bollobas1984} and independently Ajtai, Koml\'os and Szemer\'edi \cite{ajtai1985},  building upon work of P\'osa \cite{posa1976} and Korshunov \cite{korshunov}, proved that w.h.p.\footnote{We say that a sequence of events $\{\mathcal{E}_n\}_{n\geq 1}$ holds {\em{with high probability}} if $\lim_{n \to \infty}\Pr(\mathcal{E}_n)=1-o(1)$.} $\tau_2=\tau_H$. Thus, to achieve Hamiltonicy, one has to wait until the minimum degree becomes 2. Unfortunately, this necessary condition, is satisfied only by relatively dense graphs w.h.p. which have at least $0.5n\log n$ while a Hamilton cycle uses only $n$ of them. This raises the following question. Can one built a Hamiltonian subgraph of $G_t$ that spans $(1+o(1))n$ edges in an on-line fashion for some $t=(1+o(1))\tau_2$?

A generalization of this question was studied by Frieze, Krivelevich and Michaeli in the following setting that they introduced in \cite{frieze2022fast}. Once again let $e_1,e_2,...,e_N$ be a permutation of $E(K_n)$ chosen uniformly at random. The sequence $e_1,e_2,...,e_N$ is revealed, one edge at a time. Starting with the empty graph on $[n]$, as soon as an edge is revealed we must decide, immediately and irrevocably, whether to choose and add it to our graph. Let $B_i$ be the graph constructed after the $i$th edge is revealed. Let $\cB_{HAM}'$ be the set of pairs $(t,b)$ for which there exists an algorithm that builds a Hamiltonian graph of size at most $b$ within the first $t$ rounds of the above process w.h.p.
Clearly, as $B_i\subseteq G_i$ for all $i$ and $\tau_2>0.5n\log n$ w.h.p., a necessary condition for $(t,b)\in \cB_{HAM}$ is that $t\geq 0.5n\log n$ and $b\geq n$. Frieze, Krivelevich and Michaeli proved that for every $\epsilon>0$ there exists $C>0$ such that if $t\geq (0.5+\epsilon)n\log n$ and $b\geq 9n$  or $t\geq Cn\log n$ and $b\geq (1+\epsilon)n$ then $(t,b)\in \cB_{HAM}'$. 
They also conjectured that exists $\epsilon>0$ such that  $t\leq (0.5 +\epsilon)n\log n$ and $b\leq (1+\epsilon)n$ and $(t,b)\notin \cB_{HAM}'$ (see Conjecture $9$ of \cite{frieze2022fast}). Theorem \ref{thm:hamrestricted} together with Remark \ref{rem:extensions} refute this conjecture. 

A second way to generalize our question is within the framework of the Achlioptas processes. Inspired by the ``power of two choices" paradigm Achiloptas proposed the following process. Starting with the empty graph on $[n]$, at each round a set of $K=K(n)$ edges is presented chosen uniformly at random from the missing ones, and we are asked to choose one of them to add it to the current graph, immediately and irrevocably. By taking $K=1$ or choosing a uniformly random edge at each round (and ignoring repetitions of edges) one retrieves the random graph process. The aim of the Achlioptas process is to accelerate or delay a given graph property. For example Bohman and Frieze proved that there exist $\e>0$ and a strategy that w.h.p. ensure that one can construct a graph with no component of size  $\Omega(n)$ after $(1+\epsilon)n/2$ rounds, thus delaying the appearance of the giant \cite{bohman2001avoiding}. Krivelevich, Lubetzky and Sudakov studied $\tau_H(K)'$, the minimum number of rounds that one needs in order to construct a Hamiltonian graph in the above process \cite{krivelevich2010hamiltonicity}. They proved that w.h.p.
\begin{equation}\label{eq:bounds}
    (1+o(1))\bigg(1+\frac{\log n}{2K}\bigg)n \leq \tau_H(K) \leq (1+o(1))\bigg(3+\frac{\log n}{K}\bigg)n.
\end{equation}
They obtained the upper bound by constructing a random $3$-out graph which is known to be Hamiltonian \cite{bohman2009hamilton}. For the lower bound they proved that for any algorithm $\mathcal{A}$ and any $\epsilon>0$, after $(1-\epsilon)(0.5d+0.5\log n/K)n$ rounds there remain $n^{\epsilon/2}$ vertices of degree smaller than $d$ w.h.p. Their argument goes as follows. After $0.5(1-\epsilon)dn$ rounds the graph constructed so far by $\mathcal{A}$ contains at least $\epsilon d n$ vertices of degree smaller than $d$, deterministically. From those vertices, at least $n^{\epsilon/2}$ will not be incident to any edge that will be presented in the next $0.5(1-\epsilon)n(\log n)/K$ rounds. 
Krivelevich, Lubetzky and Sudakov also proved that the lower bound in 
\ref{eq:bounds} is the correct one in the regimes $K=o(\log n)$ and $K=\omega(\log n)$. In these regimes the lower bound reduces to $(1+o(1))(\log n)/2K)n$ and $(1+o(1))n$ respectively. Theorem \ref{thm:hamrestricted} implies that the lower bound is always the correct one. The problem of improving the bounds in \eqref{eq:bounds} is also stated as Problem 43 in the bibliography on Hamilton cycles in random graphs by Frieze \cite{frieze2019}.

Formally the process that we consider is the following one. Starting with the empty graph on $[n]$, at each round a set of $K=K(n)$ edges is presented chosen uniformly at random from the missing ones, and we are asked to chose \emph{at most} of them and add it to the current graph, immediately and irrevocably. We let $B_i$ be the graph constructed after $i$ rounds. 
We let $\cB_{HAM}$ be the set of pairs $(t,b)$ for which there exists an  algorithm that builds a Hamiltonian graph of size $b$ within the first $t$ rounds of the above process w.h.p. Similarly, we let $\cB_{PM}$ be the set of pairs $(t,b)$ for which there exists an algorithm that builds a matching of size $\lfloor n/2 \rfloor$ within the first $t$ rounds of the above process w.h.p.
\begin{theorem}\label{thm:hamrestricted}
Let $K=K(n)=O(\log n)$. Then,
$$\bigg(\bigg(1+\frac{250}{\log\log n}\bigg)\bigg(1+\frac{\log n}{2K}\bigg)n, \bigg(1+\frac{11}{\log\log n}\bigg)n\bigg) \in \cB_{HAM}.$$ 
\end{theorem}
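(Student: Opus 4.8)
The plan is to spend the round budget $\big(1+\tfrac{250}{\log\log n}\big)\big(1+\tfrac{\log n}{2K}\big)n$ in two stages, maintaining throughout a shrinking set $S\subseteq[n]$ of vertices not yet incorporated into the graph being built. In a \emph{structure stage} of about $(1+o(1))n$ rounds I would select edges so as to build, w.h.p., a sparse host $H$ on a vertex set $V_H\subseteq[n]$ with $(1+o(1))|V_H|$ edges which, by the strong $4$-core black box, has the robust Hamiltonicity property letting it absorb external vertex-disjoint paths --- here $V_H$ carries, or is a suitable sparse subgraph of, the strong $4$-core of a binomial random graph generated by the presented edges. In a \emph{clean-up stage} of about $(1+o(1))\tfrac{\log n}{2K}n$ rounds I would cover $[n]\setminus V_H$ by a system $\cP$ of vertex-disjoint paths with endpoints in $V_H$. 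At the end of the process one uses the black box to splice $\cP$ into a Hamilton cycle of $H$ extended over all of $[n]$; since $|E(H)|+|E(\cP)|+o(n)=n+o(n)$, the kept graph has at most $\big(1+\tfrac{11}{\log\log n}\big)n$ edges once the implicit constants are fixed.

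In the structure stage I would run a greedy selection rule: from each presented batch of $K$ edges take one incident to a vertex whose current degree is below its target (prioritising vertices of $V_H$, then vertices needed to extend $\cP$), and select nothing otherwise. Two things must then be shown. First, that the subgraph we build on $V_H$ can be coupled with a genuine binomial random graph on $V_H$ of the right density, so the black box applies and delivers robust Hamiltonicity with path absorption. Second, that after $(1+o(1))n$ selections the still-uncovered set $S$ is small --- sub-polynomial in the best case, and $o(n)$ for large $K$, with $\Theta(n)$ tolerated when $K$ is bounded since then the clean-up stage is long --- and spread out enough that the later paths are short and vertex-disjoint. Keeping the round cost of this stage near $n$ uses the fact that a constant fraction of each presented batch is useful so long as a constant fraction of vertices is unserved, so only the $(1+o(1))n$ intended selections are ever made.

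In the clean-up stage the main input is a coupon-collector estimate: a fixed vertex is incident to some edge of a given batch with probability $\approx 2K/n$, so over $(1+o(1))\tfrac{\log n}{2K}n$ rounds it receives $(1+o(1))\log n$ incident presentations in expectation. A Chernoff bound together with a union bound over all $n$ vertices --- this is exactly the step that pins down the $\big(1+\tfrac{\log n}{2K}\big)$ scaling, and where the $1+\Theta(1/\log\log n)$ slack is consumed, since $n\cdot n^{-1-\Theta(1/\log\log n)}\mathrm{poly}(\log n)\to 0$ whereas $n\cdot n^{-1}\mathrm{poly}(\log n)\not\to 0$ --- shows that w.h.p. every vertex of $S$ is hit $\Omega(\log n)$ times. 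I would use these presentations to greedily attach each uncovered vertex to the current structure, growing $\cP$ until it covers $[n]\setminus V_H$; only $O(|[n]\setminus V_H|)$ further edges are selected, and a short calculation confirms the totals stay at $(1+o(1))n$ edges and $(1+o(1))\big(1+\tfrac{\log n}{2K}\big)n$ rounds.

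Finally I would invoke the robust-Hamiltonicity-with-absorption conclusion of the black box to splice $\cP$ into a Hamilton cycle of all of $[n]$, completing the construction. The main obstacle I anticipate is the structure stage: constructing \emph{online}, with only $(1+o(1))n$ committed edges on a batched, repetition-free random stream manipulated by an adaptive rule, a graph that genuinely inherits the black box's conclusion (which is stated for a plain binomial random graph), while keeping the leftover set $S$ simultaneously small and well-separated. A secondary obstacle is fitting all three budgets together under the $\log\log n$-order slack uniformly over $1\le K=O(\log n)$: for small $K$ the binding constraint is the length of the coupon-collector wait, for $K=\Theta(\log n)$ it is the number of selections together with the cost of assembling $H$, and the two stages have to be balanced accordingly --- including, for small $K$, absorbing the $\Theta(n\log\log n)$-type cost of building $H$ into the lower-order terms.
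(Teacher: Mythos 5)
Your proposal captures the very coarse shape of the argument — build a host with the strong $4$-core absorption property, then cover the stragglers by paths and splice them in — but it is built on a requirement that the black box cannot satisfy, and this is not a minor detail: it is exactly where the paper's main idea lives.

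You want a host $H$ on $V_H\subseteq[n]$ with only $(1+o(1))|V_H|$ edges that ``by the strong $4$-core black box, has the robust Hamiltonicity property.'' Theorem \ref{thm:ham} applies to $G(m,c/m)$ with $c\geq 20$, so the host must have average degree at least $20$, i.e.\ at least $10|V_H|$ edges, for the conclusion to be available at all; a graph with $(1+o(1))|V_H|$ edges has an essentially empty strong $4$-core. You also want $V_H$ to be large (you explicitly aim to keep the leftover $S$ small, sub-polynomial or $o(n)$), so $10|V_H|$ edges would be $\sim 10n$, an order of magnitude over the budget. You flag this tension yourself as ``the main obstacle I anticipate,'' but you do not resolve it, and your two stated desiderata (large $V_H$, sparse $H$) in fact pull in opposite directions. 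The paper's resolution is the opposite of ``keep $S$ small'': the host $G'$ is built on a \emph{vanishing} fraction $n'=n/\log\log n$ of the vertices, with $10n'=o(n)$ edges, so the $4$-core black box applies at essentially no cost to the edge budget; the overwhelming majority of $[n]$, namely $U=[n]\setminus(V_{4,black}(G')\cup V_{4,blue}(G'))$ with $|U|=(1-o(1))n$, is then covered by paths during Phase $2$ using about $n$ edges and $n$ rounds. Your ``clean-up'' also has the coupon-collector role inverted: you propose hitting each of the $o(n)$ vertices of $S$ with $\Omega(\log n)$ presentations to attach them directly, whereas the paper spends its $\sim n\log n/2K$ coupon-collector rounds (Phase $4$) giving each of the few surviving \emph{path endpoints} $\log^{0.8}n$ candidate rerouting points in the interiors of other paths, and then Phase $5$ uses one round of cheap matching to $V_{4,blue}(G')$ per rerouting. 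Without the small-host idea, your structure stage cannot be made to produce something the black box applies to within the edge budget, so the proposal as written has a genuine gap.
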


The case $K=\omega(\log n)$ of the above Theorem follows from Theorem 1.2 of \cite{krivelevich2010hamiltonicity}. Once again, as $G_t$ has minimum degree $0$ for $t\leq 0.5n\log n$ w.h.p. one has that $(t,b)\in \cB_{PM}$ only if $t\geq 0.5n\log n$ and $b\geq n/2$.
\begin{theorem}\label{thm:pmrestricted}
$$\bigg(\bigg(1+\frac{250}{\log\log n}\bigg)\bigg(0.5+\frac{\log n}{2K}\bigg)n, \bigg(0.5+\frac{11}{\log\log n}\bigg)n\bigg) \in \cB_{PM}.$$ 
\end{theorem}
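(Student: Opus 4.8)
The plan is to follow the same skeleton as the proof of Theorem~\ref{thm:hamrestricted}, with the Hamilton cycle replaced by a matching of size $\lfloor n/2\rfloor$. Since a matching, unlike a Hamilton cycle, need not be globally connected, the argument is simpler and, crucially, the ``core-building'' part costs only about half as much: we split the $\big(1+\tfrac{250}{\log\log n}\big)\big(0.5+\tfrac{\log n}{2K}\big)n$ available rounds into a \emph{core phase} of length $\approx \tfrac n2(1+o(1))$ and a \emph{completion phase} of length $\approx \tfrac{\log n}{2K}\,n\,(1+o(1))$, and we split the edge budget $\big(0.5+\tfrac{11}{\log\log n}\big)n$ into roughly $\tfrac n2$ edges of a matching $M_0$ and $o(n)$ further edges used for absorption. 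The completion phase will be essentially identical to the one in the proof of Theorem~\ref{thm:hamrestricted}; only the core phase differs.

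\textbf{Core phase.} I would run the greedy matching process: at each round, if some presented edge has both endpoints currently unmatched, add one such edge to $M_0$, otherwise add nothing. Let $U_i$ be the set of unmatched vertices after round $i$; then $|U_i|$ is a decreasing integer process whose expected one-step decrease at level $u$ is $\Theta\!\big(1-(1-(u/n)^2)^K\big)$, so a routine supermartingale/Azuma analysis shows that w.h.p., after at most $\tfrac n2(1+o(1)) + o\!\big(\tfrac{\log n}{2K}n\big)$ rounds one has $|U_i|\le N_0$, where $N_0=N_0(n)$ is a suitably chosen threshold ($N_0=n/\mathrm{polylog}(n)$, tuned so that both phases fit their budgets), while $M_0$ uses at most $\tfrac n2$ edges. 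During the core phase I would also record, inside (and around) $M_0$, the combinatorial data on which the robustness black box acts --- this is where the analogue of the strong $4$-core of $G(n,p)$, now carrying a \emph{robust near-perfect matchability} property in place of robust Hamiltonicity, is extracted and shown to span all but $o(n)$ vertices of $[n]$.

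\textbf{Completion phase.} Let $W$ consist of the vertices still unmatched after the core phase together with the few vertices outside the robust core, so $|W|=o(n)$. Over the remaining $\approx \tfrac{\log n}{2K}n$ rounds I would wait, coupon-collector style, for edges incident to $W$: because $|W|$ is small, w.h.p. within this many rounds every $w\in W$ is offered several incident edges, of which I choose $O(|W|)=o(n)$. Feeding these edges together with $M_0$ into the robust near-perfect matchability property --- exactly as external paths are absorbed into the core cycle in the proof of Theorem~\ref{thm:hamrestricted} --- lets one reroute $M_0$ along short, vertex-disjoint augmenting paths that swallow all of $W$, yielding a graph $G$ with at most $\big(0.5+\tfrac{11}{\log\log n}\big)n$ edges spanning a matching of size $\lfloor n/2\rfloor$, within the stated number of rounds.

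\textbf{Main obstacle.} The delicate point is the completion phase. One must simultaneously guarantee that the residual set $W$ is small enough that coupon-collecting a bounded number of incident edges per vertex fits within the $\tfrac{\log n}{2K}n$-round and $o(n)$-edge budgets, \emph{and} that $W$ together with the structure carried over from the core phase is rich enough that these few edges genuinely suffice to absorb every vertex of $W$ --- balancing the round budget against the edge budget against the demands of the absorption/Tutte--Berge argument is the crux, and it forces the precise choice of $N_0$ and of the density at which the robust-matchability black box is invoked. By contrast, the core phase, the coupon-collector estimate, and the accounting of the final edge count are routine; and no separate treatment of large $K$ is needed, since for every $K$ the core phase only speeds up and the completion phase costs $\approx \tfrac{n\log|W|}{2K}\le (1+o(1))\tfrac{n\log n}{2K}$ rounds.
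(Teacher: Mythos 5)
There is a genuine gap, and it sits exactly where you flag the ``delicate point.'' Your core phase is a pure greedy matching on all of $[n]$: the constructed graph $M_0$ has maximum degree $1$. Such a graph cannot contain a strong $4$-core (or any analogous degree-$4$ structure), so your claim that ``the analogue of the strong $4$-core \ldots is extracted'' from ``inside (and around) $M_0$'' has nothing to extract from. The robustness black box has to act on the graph you actually \emph{build}, not on the ambient random graph of presented edges, and a matching is far too sparse. The paper resolves this by inserting a separate Phase~$1$ \emph{before} any matching is attempted: it spends $t_\epsilon=o\big((\log n / 2K)n\big)$ rounds selecting $10 n'$ edges inside a small set $[n']$, $n'=n/\log\log n$, so that $G'$ is distributed like a supercritical random graph and Theorem~\ref{thm:ham} and Lemma~\ref{lem:red} can be invoked on its strong $4$-core. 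It then greedy-matches only $U=[n]\setminus\big(V_{4,black}(G')\cup V_{4,blue}(G')\big)$, deliberately \emph{not} touching $Z$, so that $Z$ is still available as the absorbing reservoir. Your version matches all of $[n]$ and thereby destroys the structure you would later need.

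A second gap: you invoke a ``robust near-perfect matchability'' property of the strong $4$-core as a black box, but no such statement is proved or cited in the paper, and you do not supply one. The paper sidesteps this entirely: after Phases~$2$--$5$ it has a matching $M$ that covers $U\cup W'$ for some $W'\subseteq V_{4,blue}(G')$; Theorem~\ref{thm:ham} (the \emph{Hamiltonicity} statement, unchanged) is then applied to $G'[Z\setminus W']$, and a near-perfect matching $M'$ is read off from the resulting Hamilton cycle; $M\cup M'$ matches all but at most one vertex. No new matching black box is needed. Finally, your completion phase is underspecified: ``reroute $M_0$ along short augmenting paths'' is exactly the step that needs a concrete mechanism. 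The paper's Phases~$3$--$5$ make this precise --- first try matching leftover $U$-vertices directly into $W=V_{4,blue}(G')$; for the $O(n/\log^5 n)$ that remain, connect each to $\log^{0.8}n$ interior vertices of matched pairs in $U$ (Phase~$4$), then close length-$3$ augmenting paths into $W$ (Phase~$5$). Without this staging there is no reason the edges you coupon-collect on $W$ actually yield disjoint augmenting paths.

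In short: the overall ``cheap core plus coupon-collector completion'' skeleton and the budget arithmetic are on the right track, but (i) you must build the dense absorbing subgraph $G'$ on $n/\log\log n$ vertices \emph{first} and greedy-match only its complement, and (ii) the final absorption should go through Theorem~\ref{thm:ham} by taking a matching inside a Hamilton cycle on $Z\setminus W'$, not through an unproved matchability analogue.
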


\begin{remark}\label{rem:extensions}
One may consider the variation of the process where at every round $K$ edges are presented chosen uniformly at random from the ones that have not been presented yet. Theorems \ref{thm:hamrestricted} and \ref{thm:pmrestricted} as stated also hold for this variation.
\end{remark}

The rest of the paper is organized as follows. At Section 2 we review some properties of the strong $4$-core of the binomial random graph. At Section 3 we prove Theorem \ref{thm:hamrestricted}. Finally at Section 4 we discuss the proof of Theorem \ref{thm:pmrestricted} and some extensions of Theorem \ref{thm:hamrestricted}. At various points at Section 3 we will use the Chernoff bounds as stated at the following lemma.
\begin{lemma}\label{lem:chernoff}
For all $\e\in [0,1]$,
$$\Pr(|Bin(n,p)-np|>\e np )\leq 2e^{-\frac{\e^2 np}{3}}.$$
\end{lemma}

\section{The strong $k$-core}
For a graph $G$ we define the \emph{strong $k$-core} of $G$ to be the  maximal subset $S$ of $V(G)$ with the property that every vertex in $S\cup N(S)$ has at least $k$ neighbors in $S$. Observe that if the sets $S_1,S_2\subset V(G)$ have this property then so does the set $S_1\cup S_2$. Thus the strong $k$-core of a graph is well-defined. It also naturally partitions the vertex set of a graph $G$ into 3 sets which we denote by $V_{k,black}(G), V_{k,blue}(G)$ and $V_{k,red}(G)$ where $V_{k,black}(G)$ is the strong $k$-core of $G$, $V_{k,blue}(G)$ its neighborhood and $V_{k,red}(G)$ is the rest i.e. $V_{k,red}(G)=V(G)\setminus (V_{k,black}(G)\cup N(V_{k,black}(G))$. In our knowledge the strong $3$-core was first used in \cite{anastos2021scaling} for finding the longest cycle in sparse random graphs while the concept of the strong $k$-core was first formalised in \cite{anastos2021note}. There it was observed that the strong $4$-core of $G(n,c/n)$ is robustly Hamiltonian for $c\geq 20$ as described at the following theorem. For a graph $G$ and $U\subseteq V(G)$ denote by $G[U]$ the subgraph of $G$ induced by $U$. By $G(n,p)$ we denote the binomial random graph i.e. the random graph on $[n]$ where every edge appears independently with probability $p$. 
\begin{theorem}[Theorem 3.3 of \cite{anastos2021note}]\label{thm:ham}
Let $G\sim G(n,c/n)$, $c\geq 20$. Let $G'$ be the subgraph of $G$ induced by $V_{4,black}(G)\cup V_{4,blue}(G)$. Then for every $U \subseteq V_{4,blue}(G)$ and matching $M$ on  $V_{4,blue}\setminus U$  with probability $1-o(n^{-1})$ we have that  $G'[V(G)\setminus U]\cup M$ has a Hamilton cycle that spans $M$. 
\end{theorem}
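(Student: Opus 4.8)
The plan is to peel off a deterministic, structural statement. I would first isolate a property $\mathcal P=\mathcal P(G)$ of graphs, independent of $U$ and $M$, that $G\sim G(n,c/n)$ with $c\ge 20$ enjoys with probability $1-o(n^{-1})$, and then derive the conclusion of Theorem~\ref{thm:ham} for \emph{every} $U$ and $M$ from $\mathcal P$ alone. Writing $S=V_{4,black}(G)$ and $G'=G[S\cup V_{4,blue}(G)]$ (so that, by the very definition of the strong $4$-core, every vertex of $S\cup V_{4,blue}(G)$ has at least four neighbours in $S$), the property $\mathcal P$ should assert: $G[S]$ is connected; every set $T\subseteq S\cup V_{4,blue}(G)$ with $|T|\le k$ has at least $2|T|$ neighbours in $S\setminus T$, for some $k=\Omega(n)$ --- for very small $T$ this is phrased through ``$G$ contains no small, unusually dense subgraph'', for larger $T$ through the edge profile of $G(n,c/n)$; there is an edge of $G$ between any two disjoint subsets of $S$ of size $k$; and $|V_{4,blue}(G)|\le\gamma n$ for a small constant $\gamma=\gamma(c)$ with $\gamma(c)\to 0$. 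Note that, because the expansion clause only counts neighbours lying in $S$ and $S$ is disjoint from $V_{4,blue}(G)$, \emph{every} graph $G'[V(G)\setminus U]$ with $U\subseteq V_{4,blue}(G)$ automatically inherits the same expansion; since every non-$S$ vertex attaches to $S$ it also stays connected, and any two disjoint $k$-sets in it still contain disjoint $k/2$-subsets of $S$, hence an edge. Verifying $\mathcal P$ for $c\ge 20$ is a first-moment-plus-concentration exercise using Lemma~\ref{lem:chernoff}, with the value $20$ chosen to make the expansion constants work.

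Granting $\mathcal P$, fix $U\subseteq V_{4,blue}(G)$ and a matching $M$ on $V_{4,blue}(G)\setminus U$, and put $H=G'[V(G)\setminus U]\cup M$. Since $M$ is a matching whose vertices lie in $V_{4,blue}(G)$, we have $|M|\le|V_{4,blue}(G)|\le\gamma n$, negligible compared with $k$; it does not matter that the edges of $M$ need not be edges of $G$. I would produce the desired Hamilton cycle by the rotation-extension (P\'osa) method on the expander $G'[V(G)\setminus U]$, with the edges of $M$ declared \emph{frozen}: a rotation is performed only along a chord whose removed path-edge is not frozen. Start with a Hamilton cycle of $G'[V(G)\setminus U]$, which exists by the rotation-extension method using the properties in $\mathcal P$. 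Then incorporate the edges of $M$ one at a time: to add $\{a,b\}\in M$ (when it is not already present), delete a non-frozen cycle-edge at $a$ and one at $b$ --- every edge incident to $a$ or $b$ is non-frozen until $\{a,b\}$ itself is added --- so that, with the new edge $ab$, we obtain a Hamilton path through $ab$; then re-close this Hamilton path to a Hamilton cycle by rotation-extension while avoiding all currently frozen edges. At every stage the frozen set is a matching of size at most $|M|\le\gamma n\ll k$, so P\'osa's lemma still yields $\Omega(n)$ endpoints reachable by legal rotations, and the ``edge between disjoint $k$-sets'' clause of $\mathcal P$ supplies a closing edge; thus each incorporation succeeds and, after all of $M$ has been processed, we have a Hamilton cycle of $H$ that spans $M$.

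I expect the main difficulty to be twofold. Combinatorially, one must make the frozen-edge variant of rotation-extension rigorous: one has to show that forbidding up to $\gamma n$ matching edges from being rotated out shrinks the reachable-endpoint set by only $O(\gamma n)$, so that it stays of size $\Omega(n)$ and the closing edge is still found, and that incorporating the edges of $M$ one by one never dislodges a previously inserted one --- a careful but routine analysis of how reachable endpoints behave under constrained rotations. Probabilistically, the delicate point is that $c$ is merely a constant, not of order $\log n$, so honest factor-$2$ expansion of \emph{all} small vertex sets is false (dense clumps occur at essentially every scale); the real work is to pin down the weakest expansion that the rotation-extension argument actually consumes and to verify, using the strong-$4$-core structure (every vertex of $S\cup N(S)$ having at least four neighbours in $S$) together with first-moment bounds, that $G(n,c/n)$ meets it for $c\ge 20$ --- and, throughout, to keep the total failure probability below $o(n^{-1})$, which forces care in the smallest-set regime.
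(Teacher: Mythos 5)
The paper you are reading does not actually prove this statement; it imports it verbatim as Theorem~3.3 of \cite{anastos2021note} and uses it as a black box. There is therefore no in-paper proof for me to compare your sketch against, and I will evaluate the sketch on its own terms.

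Your overall template --- isolate a deterministic property $\mathcal P(G)$ that holds with probability $1-o(n^{-1})$ and is insensitive to the choice of $U\subseteq V_{4,blue}(G)$ and $M$, then run rotation--extension (P\'osa) in $G'[V(G)\setminus U]$ with the matching edges of $M$ \emph{frozen} and incorporated one at a time --- is exactly the right shape of argument for a ``robust Hamiltonicity through a prescribed matching'' statement, and the observation that expansion measured only into $S=V_{4,black}(G)$ survives deletion of any $U\subseteq V_{4,blue}(G)$ is the correct reason why one $\mathcal P$ can cover all $U$ simultaneously. The freezing bookkeeping (no two edges of a matching share an endpoint, so when $\{a,b\}$ is being inserted, no previously frozen edge touches $a$ or $b$) is also sound.

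The gap, which you in fact flag yourself, is that the expansion clause you put into $\mathcal P$ --- every $T\subseteq S\cup V_{4,blue}(G)$ with $|T|\le k$ has at least $2|T|$ neighbours in $S\setminus T$, for some $k=\Omega(n)$ --- is not available at $c=20$ with failure probability $o(n^{-1})$. For a constant average degree, constant-size subgraphs that are too dense to expand (e.g.\ configurations with more edges than vertices on $O(1)$ vertices) appear with probability that is only polynomially small in $n$, not $o(n^{-1})$, and the strong $4$-core condition (minimum degree $4$ into $S$) does not by itself preclude such clumps from lying inside $S$. So the clause fails for small $T$ at the required probability scale. Your remedy --- ``pin down the weakest expansion that rotation--extension actually consumes and verify it using the strong-$4$-core structure'' --- is precisely the content of Theorem~3.3 of \cite{anastos2021note}: it is the nontrivial part of the theorem, not a routine verification, and leaving it as a to-do means the proposal is an outline of the right strategy rather than a proof. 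A complete argument would have to (i) replace the uniform factor-$2$ expansion by a scale-dependent or booster-type condition that $G(n,20/n)$ does satisfy with probability $1-o(n^{-1})$, (ii) show that the strong-$4$-core structure upgrades this to what constrained rotations need, and (iii) re-examine the frozen-edge rotation count to confirm it only loses $O(|M|)=O(\gamma n)$ reachable endpoints under the \emph{actual} expansion available, not the idealized one.
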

The above theorem reduces the problem of finding a Hamilton cycle in a random graph $G$ to finding a subgraph $G'$ of $G$ on $n'$ vertices that is distributed as $G(n',20/n')$ for some and then covering $V(G)\setminus V_{4,black}(G')$ by vertex disjoint paths whose endpoints lie in $V_{4,blue}(G')$. The next lemma states that w.h.p. $V_{4,blue}(G')$ is linear in $|V(G')|$. For its proof see part (c) of Lemma 3.3 of \cite{anastos2021note} and  the proof of part (b) of Theorem 1.1. of \cite{anastos2021note}. 
\begin{lemma}\label{lem:red}
Let $G\sim G(n,dn)$, $d\geq 10$. Then  $|V_{4,blue}(G)|\geq 0.1\cdot (2d)^3e^{-2d}n$ with probability $1-o(n^{-1})$.
\end{lemma}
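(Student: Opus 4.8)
The plan is to lower bound $|V_{4,blue}(G)|$ by comparing it with the size of the complement of the strong $4$-core, $R:=V(G)\setminus V_{4,black}(G)$, which we can control through the low-degree vertices. Set $W:=\{v\in V(G):\deg_G(v)\le 3\}$. Two elementary consequences of the definition do the structural work: (i) every vertex of degree at most $3$ lies in $V_{4,red}(G)$ --- since every vertex of $V_{4,black}(G)\cup N(V_{4,black}(G))$ has at least $4$ neighbours inside $V_{4,black}(G)$, hence degree at least $4$; and (ii) every neighbour of a vertex of $V_{4,red}(G)$ lies in $R$ --- since by definition a red vertex has no neighbour in $V_{4,black}(G)$. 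Hence $W\cup N_G(W)\subseteq R$, and as $R=V_{4,blue}(G)\sqcup V_{4,red}(G)$ this yields
\[
|V_{4,blue}(G)|\;\ge\;|N_G(W)\setminus W|\;-\;|V_{4,red}(G)\setminus W|.
\]
So it is enough to show, each with failure probability $o(n^{-1})$, that $|N_G(W)\setminus W|$ is large while $E_1:=|V_{4,red}(G)\setminus W|$ --- the number of degree-$\ge 4$ red vertices --- is much smaller.

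For the first quantity, a routine Poisson computation gives $\mathbb{E}\,|N_G(W)\setminus W|=(1+o(1))\bigl(d+d^2+\tfrac12 d^3\bigr)e^{-d}n$: a vertex of degree $\ge 4$ lies in $N_G(W)$ precisely when one of its neighbours has degree $\le 3$, which for $d\ge 10$ happens with probability $(1+o(1))\,d\,e^{-d}(1+d+d^2/2)$. The leading term $\tfrac12 d^3e^{-d}n$ already dwarfs the target $0.1(2d)^3e^{-2d}n=0.8\,d^3e^{-2d}n$, so there is enormous room. Concentration follows from the vertex-exposure martingale: resampling all edges at a single vertex changes $|N_G(W)\setminus W|$ by only $O(\Delta(G))=O(\log n)$ (only low-degree vertices can enter or leave $W$, and each drags at most three further vertices in or out of $N_G(W)$), so Azuma's inequality confines the fluctuations to $o(e^{-d}n)$ with failure probability $o(n^{-1})$; one may assume $d=O(\log\log n)$ here, as otherwise the target is at most a power of $\log n$ and the claim is essentially trivial. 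Thus $|N_G(W)\setminus W|\ge \tfrac13 d^3e^{-d}n$ with probability $1-o(n^{-1})$.

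The real obstacle is bounding $E_1$, and here I would rely on the analysis of the strong $k$-core of $G(n,c/n)$ from \cite{anastos2021note}. Heuristically $E_1$ is negligible: a degree-$\ge 4$ red vertex must have \emph{all} of its $\ge 4$ neighbours outside the $4$-core, and since $|R|=O(\mathrm{poly}(d)\,e^{-d}n)$ this event has probability $O(\mathrm{poly}(d)\,e^{-2d})$ per vertex, so that $\mathbb{E}[E_1]=o(d^3e^{-d}n)$. Turning this into a bound with an $o(n^{-1})$ tail is delicate because the procedure producing $V_{4,black}(G)$ is a genuine multi-round cascade --- a deleted low-degree vertex drags out its whole neighbourhood, which may drag out still more --- so one must know that for $d\ge 10$ the cascade is subcritical and that $V_{4,black}(G)$ is, with probability $1-o(n^{-1})$, determined by bounded-radius neighbourhoods; equivalently, that $n^{-1}\mathbb{E}\,|V_{4,blue}(G)|$ converges to the probability $\pi_b(d)$ that the root of a $\mathrm{Poisson}(d)$ Galton--Watson tree is coloured blue by the corresponding peeling recursion, with concentration around $\pi_b(d)$. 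Both statements are supplied by \cite{anastos2021note} --- part (c) of its Lemma 3.3 and the proof of part (b) of its Theorem 1.1 --- which is exactly why the excerpt attributes the lemma to them. Granting this, the proof reduces to the crude estimate $\pi_b(d)\ge 0.1(2d)^3e^{-2d}$ for a single branching-process probability, which for $d\ge 10$ one checks by lower bounding $\Pr[\text{root blue}]$ by the probability that the root has a degree-$3$ neighbour (forcing the root out of the core) together with enough further neighbours that the recursion assigns to the core.
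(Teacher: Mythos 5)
The paper does not give a self-contained proof of this lemma at all: its ``proof'' is the single line referring the reader to part~(c) of Lemma~3.3 and the proof of part~(b) of Theorem~1.1 of \cite{anastos2021note}. So there is no in-paper argument to compare against; the question is whether your proposed argument is sound and whether it accomplishes more than that pointer.

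Your structural reduction is correct and is genuinely more explicit than what the paper offers. The two facts you isolate --- that vertices of degree at most $3$ are all red (every vertex of $V_{4,black}\cup V_{4,blue}$ has at least four neighbours inside $V_{4,black}$, hence degree at least $4$), and that a red vertex's neighbourhood avoids $V_{4,black}$, so $N_G(W)\subseteq V_{4,blue}\cup V_{4,red}$ --- are exactly right, and they deliver the clean inequality $|V_{4,blue}|\ge |N_G(W)\setminus W|-|V_{4,red}\setminus W|$. The first-moment calculation for $|N_G(W)\setminus W|$ is essentially correct, and your observation that resampling a vertex's edges perturbs $|N_G(W)\setminus W|$ by only $O(\Delta)$ (because a neighbour that falls to degree $\le 3$ drags in at most $3$ more vertices, not $\Delta$) is the right Lipschitz bound for Azuma, and the comparison $\tfrac13 d^3e^{-d}\gg 0.8\,d^3e^{-2d}$ leaves ample slack for $d\ge 10$.

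However, the proposal does not complete the proof, and it drifts between two routes without committing to either. For the $W$-decomposition to close, you must bound $|V_{4,red}\setminus W|$ with an $o(n^{-1})$ tail --- but you acknowledge you cannot do this without importing the full subcritical-cascade / local-limit analysis of the strong $4$-core from \cite{anastos2021note}. Once you invoke that machinery, though, it already yields concentration of $|V_{4,blue}|$ itself around its Galton--Watson limit $\pi_b(d)\,n$, at which point the entire $W$-apparatus is superfluous and the lemma reduces to the scalar inequality $\pi_b(d)\ge 0.1(2d)^3e^{-2d}$. Your last paragraph switches to this second route but only gestures at the estimate (``lower bound $\Pr[\text{root blue}]$ by the probability that the root has a degree-$3$ neighbour together with enough further neighbours in the core'') without carrying out the computation; in particular one must lower-bound the probability that the root, having been pushed out of the core by a low-degree neighbour, nonetheless has another neighbour which the peeling recursion retains in the core. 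So the proposal is a reasonable and largely correct plan, but in the end it leans on precisely the same external ingredients the paper points to, and neither of its two threads is taken to completion.
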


\section{Constructing a Hamilton cycle fast on-line efficiently}
In this section we prove Theorem \ref{thm:hamrestricted}. Our algorithm   consists of $5$-phases. Let $n'= n/\log\log n$. The first phase aims to construct a subgraph $G'$ on $n'$ vertices whose strong $4$-core has the robust Hamiltonicity property described by Theorem \ref{thm:ham}. The rest of the phases aim to cover the vertices in $U=[n]\setminus (V_{4,black}(G')\cup V_{4,blue}(G'))$ by vertex disjoint paths with endpoints in $V_{4,blue}(G')$. To do so we first greedily cover most of the vertices in $U$ by at most  $n/(\log\log n)^2$ vertex disjoint paths each of length at most $\log n$. Here we allow paths of length $0$ corresponding to single vertices. Let $\mathcal{P}$ be the set of these paths. Then, at Phase $3$ we greedily match the endpoints of every path $P\in \mathcal{P}$ to  $V_{4,blue}(G')$ and extent it into a path whose endpoints belong to $V_{4,blue}(G')$. Failing this, at Phase 4, we attempt to connect the endpoints of $P$ to many vertices in the interior of other paths in $\mathcal{P}$. Finally at Phase 5 using the edges selected at Phase 4 we reroute $P$ through the rest of the paths. Such a rerouting may looks as follows. Let $Q=v_1,v_2,...,v_k$ and $P=u_1,u_2,...,u_r$ be vertex disjoint paths with $v_1,v_k,u_r\in V_{4,blue}(G')$ and $u_1\notin V_{4,blue}(G')$. In such a case, adding the edges $u_1v_i$ and $v_{i-1}v$  with $v\in V_{4,blue}(G)$, $2<i\leq k$ (selected at phases 4 and 5 respectively) and removing the edge $v_{i-1}v_i$ from $E(P)\cup E(Q)$  results to 2 vertex disjoint paths that cover $V(P)\cup V(Q)$ and have all of their endpoints in $V_{4,blue}$. The above procedure builds a graph which contains a set of paths $\mathcal{P}$ with endpoints in $V_{4,blue}(G')$ that cover $U$  and are spanned by $U\cup V_{4,blue}(G')$. Now contract each of the paths in $\mathcal{P}$ into a single edge and invoke Theorem \ref{thm:ham} to identify the desired Hamilton cycle. 

We now proceed with the description of the $5$ phases and their analysis.  We let $t_\e=50(1+\log n/2K)(n/\log \log n)$, $t_1=t_\e$, $t_2=t_1+t_\e+n$, $t_3=t_2+t_\e$, $t_4=t_3+t_\e+n(\log n/2K)$ and $t_5=t_4+t_\e$. For $i=1,...,4$ if Phase $i$ returns FAILURE then we terminate our algorithm. Else we proceed and execute phase $i+1$.
\begin{algorithm}[H]
\caption{Phase 1}
\begin{algorithmic}[1]
\For{ $t=1$ to $t_1$, at round $t$} 
\\ \hspace{5mm} If $|E(B_{t-1})|< 10n'$ the select the first edge that is spanned by $[n']$.
\EndFor
\\Let $G'$ be the graph on $[n']$ spanned by the selected edges. 
\\ If $|E(G')|\leq 10n'$ or  $|V_{4,blue}(G')|\leq  10^{-6} n'$ the return FAILURE. 
\end{algorithmic}
\end{algorithm} 
The probability that $|E(B_{t-1})|<10n'$ and at a round $t$ we are  presented with an edge spanned by $[n']$ is at least $1-(1- (\binom{n'}{2}-10n')/\binom{n}{2})^K= 1-e^{-(1+o(1))K(n'/n)^2}= 1-e^{-(1+o(1))K/( \log\log n)^2}$ 
The calculation below and Lemma \ref{lem:red} imply that w.h.p. the algorithm does not return FAILURE at the end of Phase 1. 
$$\Pr(|E(G')|< 10n') \leq \Pr\bigg(Bin\bigg(t_\e,1-e^{-\frac{(1+o(1))K}{ (\log\log n)^2}}\bigg) <10n'\bigg)=o(1). $$
The last equality  follows from the Chernoff bounds and the following case distinction. If $K\leq (\log\log n)^2$ then
$$t_\e \bigg(1-e^{-\frac{(1+o(1))K}{ (\log\log n)^2}}\bigg) \geq t_\e  \cdot \frac{0.6K}{ (\log\log n)^2} \geq \frac{n\log n}{2K\log\log n}  \cdot \frac{0.6K}{ (\log\log n)^2}=\omega(n).$$
Else if $  (\log\log n)^2 \leq K =O(\log n)$ then $t_\e (1-e^{-\frac{(1+o(1))K}{(\log\log n)^2}})\geq 0.6 t_\e >10n'.$

Let  $Z=V_{4,black}(G)\cup V_{4,blue}(G')$, $U=[n]\setminus Z$ and  $W=V_{4,blue}(G')$. 

\begin{algorithm}[H]
\caption{Phase 2}
\begin{algorithmic}[1]
\For{$t=t_1+1$ to $t_2$, at round $t$}
\\ \hspace{5mm} Select the first edge that is incident to 2 vertices $u,v$ in $U$ of degree at most $1$ such that (i) none of $u,v$  is an endpoint of a path of length at least $0.5\log n$ in $B_{t-1}$ and (ii) $u,v$ are not the endpoints of the same path in $B_{t-1}$.
\EndFor
\\Let $\mathcal{P}$ be the maximal set of paths spanned by $E_2=E(B_{t_2})\setminus E(B_{t_1})$. Here we allow paths of length $0$ corresponding to single vertices. So if some vertex $v$ is not incident to $E_2$ then $v$ belongs to $\cP$. 
\\If $|\mathcal{P}|\leq n/(\log\log n)^2$ then return FAILURE. 
\end{algorithmic}
\end{algorithm} 

There are at most $n/2\log n$ paths of length at least $0.5\log n$. Thus while $U$ cannot be covered by $n/(\log\log n)^2$ paths spanned by edges selected during Phase 2, at least $0.9 n/(\log\log n)^2$ of these paths have length at most $0.5\log n-1$. If any of the at least  $0.8n^2/(\log\log n)^4$ edges joining a pair of endpoints of distinct such paths is presented then an edge is selected and the number of paths will be reduced by 1. This occurs with probability at least $1-(1-(1.6+o(1))/(\log\log n)^4)^K\geq 1-e^{-\frac{K}{(\log\log n)^4}}.$
Therefore, Phase 2 returns FAILURE with probability at most 
\begin{equation*}
    \Pr\bigg(Bin\bigg(n+t_\e, 1-e^{-\frac{K}{(\log\log n)^4}}\bigg)<n\bigg)=o(1).
\end{equation*}
The above calculation follows from the Chernoff bounds and the following case distinction. If $K\leq (\log\log n)^4$ then, as $t_\e =50(1+\log n/2K)(n/\log \log n)$,
\begin{align*}
&    (n+t_\e)\bigg(1-e^{-\frac{K}{(\log\log n)^4}}\bigg) \geq \bfrac{n\log n}{2K\log\log n}\bfrac{0.6K}{(\log\log n)^4}  =\omega(n). 
\end{align*}
Else if $  (\log\log n)^4 \leq K \leq \log^{0.9}n$ then,
\begin{align*}
&    (n+t_\e)\bigg(1-e^{-\frac{K}{(\log\log n)^4}}\bigg)\geq n(1+0.5\log^{0.09}n)(1-e^{-0.1}) =\omega(n). 
\end{align*}
Else $  \log^{0.9}n \leq K =O(\log n)$ and 
\begin{align*}
&    (n+t_\e)\bigg(1-e^{-\frac{K}{(\log\log n)^4}}\bigg)\geq \bigg(n+ \frac{n}{\log\log n}\bigg)\bigg(1-\frac{1}{(\log\log n)^2}\bigg)
\geq n\bigg(1+ \frac{0.5}{\log\log n}\bigg).
\end{align*}

Let $End$ be the multiset consisting of endpoints of $\mathcal{P}$. Every element of $End$ has multiplicity $1$ except the elements that correspond to paths of length $0$ in $\mathcal{P}$ which have multiplicity $2$.
\begin{algorithm}[H]
\caption{Phase 3}
\begin{algorithmic}[1]
\For{$t=t_2+1$ to $t_3$, at round $t$}
\\\hspace{5mm} Select the first edge $e$  that 
matches a vertex in $End$ to a  vertex in $W$  and remove a sole copy of its endpoints from $End$ and $W$ respectively.
\EndFor
\\ Update $\mathcal{P}$ to be the maximal set of paths spanned by $E(B_{t_3})\setminus E(B_{t_1})$.
\\ If $|End|> n/\log^5 n$ then return FAILURE.
\end{algorithmic}
\end{algorithm} 
If Phases 1 and 2 do not return FAILURE then throughout Phase 3
$$|W|\geq |V_{4,blue}(G')|-2|\mathcal{P}|\geq  10^{-6}n'-2n/(\log\log n)^2\geq 10^{-7}n'.$$ In addition initially $|End|\leq 2|\mathcal{P}|\leq 4n/(\log\log n)^2$. Phase 3 returns FAILURE only if there exists $0\leq i \leq 6\log\log n$ such that at the beginning of round $t_2+1+(1-2^{-i})t_\e$ the set $End$ has size at most $2^{2-i}n/(\log\log n)^2$ while at the end of round $t_2+(1-2^{-i-1})t_\e$ the set $End$ has size larger than $2^{1-i}n/(\log\log n)^2$. This occurs with probability at most
\begin{align*}
    &\sum_{i=0}^{6\log\log n} \Pr\bigg(Bin\bigg(2^{-i-1}t_\e, 1-\bigg(1-\frac{10^{-7}n' \cdot \frac{2^{1-i}}{(\log\log n)^2}}{\binom{n}{2}}\bigg)^K\bigg)< \frac{2^{1-i}n}{(\log\log n)^2}\bigg)
    \\& \leq \sum_{i=0}^{6\log\log n} \Pr\bigg(Bin\bigg(2^{-i-1}t_\e, 1-e^{-\frac{2^{-i}K}{(\log\log n)^4}}\bigg)< \frac{2^{-i}n}{(\log\log n)^2}\bigg)=o(1).
\end{align*}
The last equality follows from the Chernoff bounds and the following case distinction. If $2^{-i}K <(\log\log n)^4$ then 
\begin{align*}
    &t_\e\bigg(1-e^{-\frac{2^{-i}K}{(\log\log n)^4}}\bigg) 
\geq t_\e \cdot \frac{0.6\cdot 2^{-i}K}{(\log\log n)^4}
\geq \frac{n\log n}{2K} \cdot \frac{0.6\cdot 2^{-i}K}{(\log\log n)^4} \geq 
2^{-i} n.
\end{align*}   
Else $2^{-i}K \geq (\log\log n)^4$ and
\begin{align*}
    &t_\e\bigg(1-e^{-\frac{2^{-i}K}{(\log\log n)^4}}\bigg)\geq\frac{n}{\log\log n}  (1 -e^{-1} ) = \geq \bfrac{0.1n}{\log\log n}.
\end{align*}   
\begin{algorithm}[H]
\caption{Phase 4}
\begin{algorithmic}[1]
\\Orient the paths in $\mathcal{P}$ and let $\mathcal{P}^+$ be the set of paths in $\mathcal{P}$ with no endpoint in $End$.
\\ For $v\in End$ set $End_v=\mathcal{P}_v=\emptyset$.
\For{ $t=t_3+1$ to $t_4$ at round $t$}
\\ \hspace{5mm} Select the first edge  $e=vw$ such that $v\in End$, $|End_v|<\log^{0.8}n$ and $w$ belongs to some path $P \in \mathcal{P}^+$ and  is not the first vertex on $P$, add the vertex preceding $w$ on $P$ to $End_v$, add $P$ to $\mathcal{P}_v$ and remove $P$ from $\mathcal{P}^+$.
\EndFor
\\if there exists $v\in End$ such that $|End_v| \leq \log^{0.8} n$ then return FAILURE. 
\end{algorithmic}
\end{algorithm} 

Each path in $\mathcal{P}$ has size at most $\log n$. In addition $\mathcal{P}$ covers $U$, has size $o(n')$ and $|End|\leq  n/\log^5 n$. Hence for $v\in End$ at each round there exist at least $n-20n'-|\mathcal{P}|-|End|(\log n+\log^{0.8}n)\geq (1-22(\log\log n)^{-1})n$ vertices $w$ such that if the edge $e=vw$ is presented, no other edge incident to $End$ is presented and $|End_v|<\log^{0.8}n$ then $|End_v|$ is increased by $1$.  This occurs with probability at least 
$$K \cdot  \frac{2(1-22(\log\log n)^{-1})}{n} 
\bigg(1-\frac{n|End| }{\binom{n}{2}}\bigg)^{K-1} \geq \frac{2K(1-30(\log\log n)^{-1})}{n}.
$$
Therefore, with $p=\bigg(1-\frac{30}{\log\log n}\bigg)\frac{2K}{n}$ and $r=t_\e+\frac{n\log n}{2K} > \big(1+\frac{50}{\log\log n}\big)\frac{n\log n}{2K}$, and the probability that Phase 4 return FAILURE is at most  
\begin{align*}
 &n\Pr(Bin(r,p)\leq \log^{0.8}n) \leq 2n \binom{r}{\log^{0.8}n} p^{\log^{0.8}n}(1-p)^{r-\log^{0.8} n} 
\\& \leq 2n \bfrac{erp}{\log^{0.8}n}^{\log^{0.8}n} e^{-pr+ p\log^{0.8}n}
\leq  2n (2\log n)^{0.2\log^{0.8}n} e^{-\log n - \frac{\log n}{\log\log n}}
=o(1).
\end{align*}

\begin{algorithm}[H]
\caption{Phase 5}
\begin{algorithmic}[1]
\\ Let $E^+=E^-=\emptyset$
\For{ $t=t_4+1$ to $t_5$, at round $t$}
\\ \hspace{5mm} Select the first edge $e=xy$ such that  $x\in End_v$ for some $v\in End$ and $y\in W$, remove $y$ from $W$, add $e$ to $E^+$ and add the edge  $xx'$ to $E^-$. Here $x'$ is the vertex succeeding $x$ on the unique path in $\mathcal{P}$ that contains $x$.
\EndFor
\\ If $End \neq \emptyset$ then return FAILURE. 
\end{algorithmic}
\end{algorithm} 
For $v\in End$ there exists at least $|End_v|\times |W|$ pairs satisfy the condition described above. Once again, if the phases $1$ and $2$ do not return FAILURE then throughout the execution of Phase $5$ we have that $|W|\geq 10^{-6}n'-2n/(\log\log n)^2>10^{-7}n'$. Therefore,  
$|End_v|\times |W| \geq n\log^{0.5}n$. As $t_\e \geq n/\log\log n$ and $|End|<n$,  Phase 5 returns FAILURE with probability at most 
\begin{align*}
\Pr\bigg(Bin\bigg(\frac{n}{\log\log n},\frac{2\log^{0.5}n}{n}\bigg)<  n\bigg) =o(1).
\end{align*} 
Let $\mathcal{E}$ be the event that none of the 5 phases returns FAILURE. From the preceding analysis of phases 1 to 5 we have that $\Pr(\mathcal{E})=1-o(1)$.
\begin{lemma}\label{lem:countedges}
The above algorithm selects at most $11n'+n$ edges in total. In addition, if $\mathcal{E}$ occurs  then the selected edges span a set of vertex disjoint paths $\mathcal{P}^*$ that cover $U$ such that for $P\in \mathcal{P}^*$ the  endpoints of $P$ belong to $V_{4,blue}(G')$ and the rest of the vertices in $V(P)$ belong to $U$. 
\end{lemma}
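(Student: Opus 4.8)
The plan is to bound the number of edges selected in each phase separately and then, conditioning on $\mathcal{E}$, track how the selected edges assemble into the desired family of paths. For the edge count: Phase~1 selects an edge only while $|E(B_{t-1})|<10n'$, so it contributes at most $10n'$ edges. Phase~2 selects edges spanned by $U$; each such selection, by condition~(i)--(ii) in the phase, joins endpoints of two distinct short paths into one longer path, so the edges selected in Phase~2 form a linear forest on $U$ and there are at most $n-1$ of them (in fact the relevant bound is $|U|\le n$). Phase~3 selects one edge per vertex removed from $End$, and $|End|\le 2|\mathcal{P}|\le 2n/(\log\log n)^2$, so Phase~3 contributes at most $2n/(\log\log n)^2$ edges. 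Phase~4 selects edges of the form $vw$ with $v\in End$ and $|End_v|<\log^{0.8}n$, hence at most $|End|\cdot\log^{0.8}n\le (n/\log^5 n)\log^{0.8}n = o(n')$ edges. Phase~5 selects edges into $E^+$, and each vertex $x\in\bigcup_v End_v$ is used at most once, so again at most $|End|\cdot\log^{0.8}n = o(n')$ edges go into $E^+$. Summing, the total is at most $10n' + n + 2n/(\log\log n)^2 + o(n') \le 11n' + n$ for $n$ large, which gives the first claim. (One must also remember that the algorithm adds the $E^-$ edges conceptually only to \emph{remove} them in the rerouting; it does not select them into $B$, so they do not count.)

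For the structural claim, condition on $\mathcal{E}$. After Phase~2 the selected edges of $E_2$ span a linear forest $\mathcal{P}$ covering $U$ (every vertex of $U$ lies on some path, possibly a trivial one), with all paths of length at most $\log n$; this is exactly how $\mathcal{P}$ is defined, and since $\mathcal{E}$ holds we also know $|\mathcal{P}|\le n/(\log\log n)^2$. Phase~3 attaches, to some path-endpoints, an edge to a distinct vertex of $W=V_{4,blue}(G')$, extending those paths so that the new endpoint lies in $V_{4,blue}(G')$; since $\mathcal{E}$ holds, after Phase~3 at most $n/\log^5 n$ path-endpoints remain in $End$, i.e. all but a $o(1)$-fraction of the paths already have \emph{both} endpoints in $V_{4,blue}(G')$, with interiors in $U$. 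The work is then to show that Phases~4--5 repair the remaining paths. Here I would argue as in the rerouting description preceding the lemma: for each $v\in End$, Phase~4 (using $\mathcal{E}$, so $|End_v|\ge\log^{0.8}n$) provides many edges $vw$ where $w$ is the successor of some interior vertex $x$ of a path $P\in\mathcal{P}^+$, and Phase~5 (again using $\mathcal{E}$, so every $v\in End$ is eventually handled) picks an edge $xy$ with $y\in W$; adding $vw$ and $xy$ and deleting the path-edge $xx'$ from $E(P)$ merges the path through $v$ with $P$ into paths all of whose endpoints lie in $V_{4,blue}(G')$ and whose interiors lie in $U$. One should note that distinct $v\in End$ are assigned \emph{disjoint} sets $\mathcal{P}_v$ of paths (Phase~4 removes $P$ from $\mathcal{P}^+$ when it is added to some $\mathcal{P}_v$), so these rerouting operations act on disjoint parts of the linear forest and can be performed simultaneously; the result is a linear forest $\mathcal{P}^*$ covering $U$ with the stated endpoint/interior property.

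The main obstacle, I expect, is the bookkeeping in the last step: verifying that the simultaneous rerouting across all $v\in End$ genuinely produces vertex-disjoint paths and that no path gets reused or broken into a cycle. The key points to check carefully are (a) that $End_v$ consists of \emph{interior} vertices of paths in $\mathcal{P}^+$, so each has a well-defined successor $x'$ and the deletion of $xx'$ splits a path rather than disconnecting an endpoint; (b) that the paths in $\bigcup_v\mathcal{P}_v$ are pairwise distinct and distinct from the paths carrying the $End$-vertices, which follows from the removal step in Phase~4 together with the fact that a vertex $v\in End$ is an endpoint of its own path, hence not an interior vertex of any path in $\mathcal{P}^+$; and (c) that the $W$-vertices consumed in Phases~3 and~5 are all distinct, which holds because each is removed from $W$ upon use and, by $\mathcal{E}$, $|W|$ stays above $10^{-7}n'$ throughout, so we never run out. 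Once these are in place, the two claims of the lemma follow directly from the phase-by-phase accounting above.
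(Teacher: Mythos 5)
Your edge-count is the same as the paper's, phase by phase: Phase~1 contributes at most $10n'$ (the stopping rule), Phase~2 produces a linear forest on $U$ so at most $n-1$ edges, Phases~3--5 contribute $O(|\mathcal{P}| + |End|\log^{0.8}n) = o(n')$, and summing gives $11n'+n$. For the structural claim, however, you take a genuinely different route. The paper's proof is a short global argument: it sets $E_P$ to be the final edge set (after removing the $E^-$ edges), observes by construction that $E_P$ is acyclic, and then simply counts degrees -- every vertex of $U$ has degree exactly $2$ in $E_P$, every vertex of $V_{4,blue}(G')$ has degree at most $1$, and every vertex of $V_{4,black}(G')$ has degree $0$ -- from which the linear-forest structure with the stated endpoint/interior property follows immediately without ever tracing individual reroutings. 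You instead argue operationally, tracking how each $v\in End$ is repaired by splitting some $P\in\mathcal{P}_v$ at the edge $xw$ and attaching the two new loose ends to $v$ and to a fresh $W$-vertex, and then verifying disjointness of the operations via the bookkeeping points (a)--(c). Both are valid; the paper's degree-count approach is considerably slicker because it avoids exactly the case analysis you flag as the main obstacle (e.g.\ whether $x\in End_v$ is interior -- it need not be when $w$ is the second vertex of $P$, but this is harmless, and the degree argument sidesteps the issue entirely), whereas your version is more transparent about \emph{why} the algorithm works and is closer to the intuitive picture given before the lemma. If you wanted to complete your version rigorously you would need to verify (a)--(c) in detail; switching to the degree-count argument would let you skip that.
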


\begin{proof}
 Let $E_i=E(B_{t_i})\setminus E(B_{t_{i-1}})$ for $i=1,...,5$.  $E_2$ induces a set of paths on $[n]$ while $|E_3|+|E_5|\leq 2\mathcal{P}=o(n')$. Thereafter $|E_1|= 10n'$ and $|E_4|\leq |End|\log^{0.8}n=o(n')$. Thus, in the event $\mathcal{E}$ at most $11n'+n$ edges are selected in total. 

Thereafter, assume  that $\mathcal{E}$ occurs  and let $E_P= E_2\cup E_3\cup E^+)\setminus E^-$. Then due construction the set $E_P$ does not span a cycle. In addition every vertex in $U,V_{4,blue}$ and $V_{4,black}(G)$ respectively in incident to $2$, at most $1$ and $0$ respectively edges in $E_P$. Therefore $E_P$ induces the desired set of paths.  
\end{proof}

\textbf{Proof of Theorem \ref{thm:hamrestricted}} Assume that the high probability event $\mathcal{E}$ occurs. For every path $P\in \mathcal{P^*}$ let $e_P$ be an edge joining its endpoints. Then the set $M=\{e_P:P\in \mathcal{P^*}\}$ induces a matching on $V_{4,blue}(G')$. Let $G^*$ be the subgraph of $B_{t_5}$ spanned by $V_{4,blue}(G')\cup V_{4,black}(G')$. Theorem \ref{thm:ham} implies that there exists a Hamilton cycle $H$ in $G^*\cup M$ that spans all the edges of $M$. Substituting every edge $e_P\in M$ on $H$ with the path $P$ gives a Hamilton cycle in $B_{t_5}$. As $|E(B_{t_5})|\leq n+11n'$ by Lemma \ref{lem:countedges} and $t_5< (1+250/\log\log n)(1+(\log n)/2K)n$ the statement of Theorem \ref{thm:hamrestricted} follows.

\section{Concluding Remarks}
1. To proof Theorem \ref{thm:pmrestricted} one may use a 5-phase algorithm as done in the proof of Theorem \ref{thm:hamrestricted}. First execute Algorithm 1 and let $Z=V_{4,black}\cup V_{4,blue},W=V_{4,blue}$ and $U=[n]\setminus Z$. At Phase 2 greedily match the vertices in $U$ and at Phase 3 greedily match the unmatched vertices in $U$ to vertices in $W$. Let $End$ be the set of unmatched vertices and $M$ be the current matching. At Phase 4 match all the vertices in $End$ to $\log^{0.8}n$ matched vertices in $U$. Finally at Phase 5, by considering the edges selected at Phase 4, select edges that create $M$-augmenting paths of length 3 from $End$ to $W$ and augment along those paths until every vertex in $U$ is matched. At this point there exists $W'\subset W$ such that $M$ pairs up the vertices in $W'\cup U$. By Theorem \ref{thm:ham} the graph spanned by $Z\setminus W'$ has a Hamilton cycle $H$. Let $M'$ be a maximum matching spanned by $H$. Then $|M\cup M'|=\lfloor n/2\rfloor$ as it  matches all but at most $1$ vertices of $G$.

2. Let $\epsilon>0$. In place of Theorem \ref{thm:hamrestricted} one may asks what is the minimum number of rounds $\tau_{k-HAM}(K)$ needed to construct a graph that spans $k\in \mathbb{N}^+$ edge disjoint Hamilton cycles under the restriction of selecting at most $(k+\epsilon)n$ edges in total. One can construct such a graph by taking the union of $k$ Hamiltonian graphs. These can be constructed by executing our algorithm $k$ times. These $k$ execution will share Phase $4$. Thus $\tau_{k-HAM}(K)\leq (1+o(1))(0.5k+0.5\log n/K)n$. Once again this bound matches the lower bound on the number of rounds needed to construct a graph of minimum degree $k$ due to Krivelevich, Lubetzky and Sudakov.
 
3. Finally one may ask whether a hitting time version of Theorem \ref{thm:hamrestricted} holds. More concretely, let $\epsilon>0$ and $\tau_2$ be minimum such that every vertex is incident to at least $2$ edges that are presented during the first $\tau_2$ rounds. Does there exists an algorithm that constructs a Hamiltonian graph by selecting at most $(1+\epsilon)n$ edges during the first $\tau_2$ rounds w.h.p.? In contrast with the hitting time result regarding the Hamiltonicity of the random graph process and the somewhat tight Theorem \ref{thm:hamrestricted}, the answer to this question is negative, assuming that $n-K(n)=\Omega(n)$. Indeed, let $\epsilon<10^{-9}$ and fix an algorithm $\mathcal{A}$ that selects at most $(1+\epsilon)n$ edges during the first $\tau_2$ rounds. Let $B'$ and $B$ be the graph constructed by $\mathcal{A}$ by the end of rounds $\tau_2-1$ and $\tau_2$ respectively. In not hard to argue that w.h.p. either $B$ is not Hamiltonian or $B'$
$B'$ has a single vertex of degree $1$, say $v$. Furthermore, with probability bounded away from $0$, $v$ is incident to a single edge presented at round $\tau_2$ and the other endpoint of that edge has exactly $2$ neighbors in $B'$ both having degree $2$. Thus $B$ contains $3$ vertices of degree $2$ with a common neighbor, this is an obstruction for having a Hamilton cycle.
\subsection*{Acknowledgment} This project has received funding from the European Union’s Horizon 2020 research and innovation
programme under the Marie Sk\l{}odowska-Curie grant agreement No 101034413
\includegraphics[width=5.5mm, height=4mm]{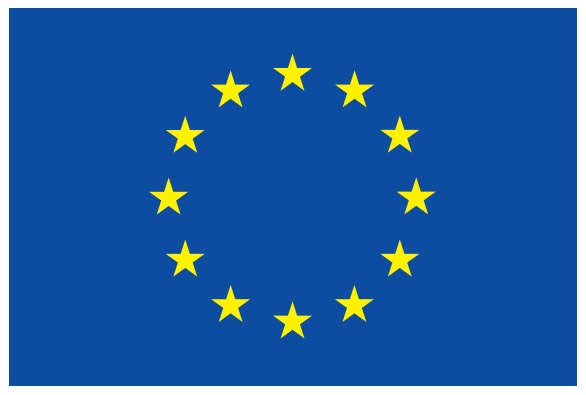}.
\bibliographystyle{plain}
\bibliography{bib}
\end{document}